\tikzset{every loop/.style={min distance=10mm,looseness=10}}
\tikzset{every state/.style={minimum size=2mm}}
\newtheorem{theorem}{Theorem}
\newtheorem{conjecture}{Conjecture}
\title{Solving computational problems in the theory of word-representable graphs}
\author{\"Ozg\"ur Akg\"un\thanks{School of Computer Science, University of St Andrews, St Andrews, Fife KY16 9SX, UK. Emails: \{ozgur.akgun,ian.gent\}@st-andrews.ac.uk
}, Ian P. Gent\footnotemark[1], Sergey Kitaev\thanks{School of Computer and Information Sciences, University of Strathclyde, Glasgow, G1 1HX, UK. Email: sergey.kitaev@cis.strath.ac.uk}, Hans Zantema\thanks{Department of Computer Science, Eindhoven University of Technology, P.O. box 513, 5600 MB Eindhoven, The Netherlands. Email: H.Zantema@tue.nl}}
\begin{document}

\maketitle

\begin{abstract}
A simple graph $G=(V,E)$ is word-representable if there exists a word $w$
over the alphabet $V$ such that letters $x$ and $y$ alternate in $w$ iff $xy\in E$. Word-representable graphs generalize several important classes of graphs. A graph is word-representable iff it admits a semi-transitive orientation. We use semi-transitive orientations to enumerate connected non-word-representable graphs up to the size of 11 vertices, which led to a correction of a published result. Obtaining the enumeration results took 3 CPU years of computation.

Also, a graph is word-representable iff it is $k$-representable for some $k$, that is, if it can be represented using $k$ copies of each letter. The minimum such $k$ for a given graph is called graph's representation number.  Our computational results in this paper not only include distribution of $k$-representable graphs on at most 9 vertices, but also have relevance to a known conjecture on these graphs.  In particular, we find a new graph on 9 vertices with high representation number.

Finally, we introduce the notion of a $k$-semi-transitive orientation refining the notion of a semi-transitive orientation, and show computationally that the refinement is not equivalent to the original definition unlike the equivalence of $k$-representability and word-representability.
\end{abstract}

\section{Introduction}

Letters $x$ and $y$ alternate in a word $w$ if after deleting in $w$ all letters but the copies of $x$ and $y$ we either obtain a word $xyxy\cdots$ (of even or odd length) or a word $yxyx\cdots$ (of even or odd length). For example, the letters 2 and 5 alternate in the word 11245431252, while the letters 2 and 4 do not alternate in this word. A simple graph $G=(V,E)$ is {\em word-representable} if there exists a word $w$ over the alphabet $V$ such that letters $x$ and $y$ alternate in $w$ iff $xy\in E$. By definition, $w$ {\em must} contain {\em each} letter in $V$. We say that $w$ {\em represents} $G$, and that $w$ is a {\em word-representant}. 

The definition of a word-representable graph works both for vertex-labeled and unlabeled graphs because any labeling of a graph $G$ is equivalent to any other labeling of $G$ with respect to word-representability (indeed, the letters of a word $w$ representing $G$ can always be renamed). For example, the graph to the left in Figure~\ref{wrg-ex} is word-representable because its labeled version to the right in Figure~\ref{wrg-ex} can be represented by 1213423. For another example, each {\em complete graph} $K_n$ can be represented by any permutation $\pi$ of $\{1,2,\ldots,n\}$, or by $\pi$ concatenated any number of times.   Also, the {\em empty graph} $E_n$ (also known as {\em edgeless graph}, or {\em null graph}) on vertices $\{1,2,\ldots,n\}$ can be represented by $12\cdots (n-1)nn(n-1)\cdots 21$, or by any other permutation concatenated with the same permutation written in the reverse order.

\begin{figure}[h]
\begin{center}
\begin{tabular}{ccc}
\begin{tikzpicture}[node distance=1cm,auto,main node/.style={fill,circle,draw,inner sep=0pt,minimum size=5pt}]

\node[main node] (1) {};
\node[main node] (2) [below left of=1] {};
\node[main node] (3) [below right of=1] {};
\node[main node] (4) [below right of=2] {};

\path
(1) edge (2)
(1) edge (3);

\path
(2) edge (3)
(2) edge (4);
\end{tikzpicture}

& 

\ \ \ \ \ \ \

&

\begin{tikzpicture}[node distance=1cm,auto,main node/.style={circle,draw,inner sep=1pt,minimum size=2pt}]

\node[main node] (1) {{\tiny 3}};
\node[main node] (2) [below left of=1] {{\tiny 2}};
\node[main node] (3) [below right of=1] {{\tiny 4}};
\node[main node] (4) [below right of=2] {{\tiny 1}};

\path
(1) edge (2)
(1) edge (3);

\path
(2) edge (3)
(2) edge (4);

\end{tikzpicture}

\end{tabular}
\end{center}
\vspace{-5mm}
\caption{An example of a word-representable graph}\label{wrg-ex}
\end{figure}
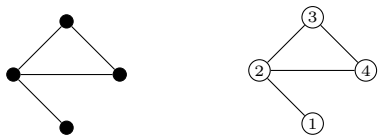

We note that the class of word-representable graphs is {\em hereditary}. That is, removing a vertex $v$ in a word-representable graph $G$ results in a word-representable graph $G'$. Indeed, if $w$ represents $G$ then $w$ with $v$ removed represents $G'$. 

There is a long line of research on word-representable graphs (see, e.g.\ \cite{AKM15,CKS16,CKS16A,CKL17,GKZ16,G16,GK18,HKP10,HKP11,HKP16,K13,KP08,M16}) that is summarized in \cite{K17,KL15}. The roots of the theory of word-representable graphs are in the study of the celebrated {\em Perkins semigroup} in \cite{KS08}, which has played a central role in semigroup theory since 1960, particularly as a source of examples and counterexamples. However, the significance of word-representable graphs is in the fact that they generalize several important classes of graphs such as  $3$-colorable graphs, comparability graphs and circle graphs. 

One of the key tools to study word-representable graphs is the notion of a semi-transitive orientation to be defined next. 

\subsection{Semi-transitive orientations}\label{semi-transitive-sec}

The notion of a semi-transitive orientation was introduced in \cite{HKP11,HKP16}, but we follow \cite[Section 4.1]{KL15} to introduce it here.
A graph $G=(V,E)$ is {\em semi-transitive} if it admits
an {\em acyclic} orientation such that for any directed path 
$v_1\rightarrow v_2\rightarrow \cdots \rightarrow v_k$ with $v_i\in V$ for all $i$, $1\leq i\leq k$, either
\begin{itemize}
\item there is no edge $v_1\rightarrow v_k$, or 
\item the edge $v_1\rightarrow v_k$ is present and there are edges $v_i\rightarrow v_j$ for all $1\le i<j\le k$. 
In other words, in this case, the (acyclic) subgraph induced by the vertices $v_1,\ldots,v_k$ is transitive (with the unique source $v_1$ and the unique sink $v_k$).  
\end{itemize}
We call such an orientation {\em semi-transitive}. In fact, the notion of a semi-transitive orientation is defined in \cite{HKP11,HKP16} in terms of {\em shortcuts} as follows. A {\em semi-cycle} is the directed acyclic
graph obtained by reversing the direction of one edge of a directed cycle in which the directions form a directed path. An acyclic digraph is a shortcut if it is induced by
the vertices of a semi-cycle and contains a pair of non-adjacent
vertices. Thus, a digraph on the vertex set $\{ v_1, \ldots,
v_k\}$ is a shortcut if it contains a directed path $v_1\rightarrow v_2\rightarrow \cdots
\rightarrow v_k$, the edge $v_1\rightarrow v_k$, and it is missing an edge $v_i\rightarrow v_j$ for some $1 \le i
< j \le k$; in particular, we must have $k\geq 4$, so that any shortcut is on at least four vertices. Clearly, this definition is just another way to introduce the notion of a semi-transitive orientation presented above. 

It is not difficult to see that
all transitive (that is, comparability) graphs are semi-transitive,
and thus semi-transitive orientations are a generalization of transitive orientations.  A key theorem in the theory of word-representable graphs is presented next.

\begin{theorem}[\cite{HKP11,HKP16}]\label{key-thm} A graph $G$ is word-representable if and only if it admits a semi-transitive orientation (that is, if and only if $G$ is semi-transitive).
\end{theorem}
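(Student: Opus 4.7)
The plan is to prove the two directions of the equivalence separately, using quite different constructions.

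For the direction ``word-representable implies semi-transitive,'' given a word $w$ representing $G$, I would define an orientation on the edges of $G$ by directing each edge $xy$ from $x$ to $y$ exactly when the first occurrence of $x$ in $w$ precedes the first occurrence of $y$. Acyclicity is immediate because this rule is derived from a total order on $V$. For the shortcut-free condition, I would argue by contradiction: suppose there is a directed path $v_1\rightarrow v_2\rightarrow\cdots\rightarrow v_k$ together with the edge $v_1\rightarrow v_k$, but some edge $v_iv_j$ is missing for $1\le i<j\le k$, and choose such a witness with $k$ minimal. Since $v_1v_k$ is an edge, $v_1$ and $v_k$ alternate in $w$; combining this with the alternations of the consecutive pairs $v_\ell v_{\ell+1}$ along the path and the first-occurrence ordering, I would derive positional constraints that force $v_i$ and $v_j$ to alternate as well, contradicting the assumption.

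For the converse direction, I would use induction on $|V|$. A semi-transitive orientation of $G$ restricts to a semi-transitive orientation on the induced subgraph $G - v$ for any $v$, since removing a vertex cannot create new cycles or new shortcuts. Picking $v$ to be a source (which exists by acyclicity) and invoking the induction hypothesis yields a word $w'$ representing $G - v$. The goal is to splice copies of $v$ into $w'$ to form a word $w$ representing $G$. A natural scheme, after first passing to a $k$-uniform representant $w'$ (each letter of $G - v$ appearing $k$ times), is to insert $k$ copies of $v$ at positions determined by the out-neighbors of $v$, roughly placing one $v$ just before each occurrence of a vertex in the out-neighborhood.

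The main obstacle is verifying that this insertion simultaneously secures alternation of $v$ with each out-neighbor and non-alternation of $v$ with each non-neighbor. Local alternation with out-neighbors is essentially built into the placement rule, but ruling out accidental alternation of $v$ with a non-neighbor $u$ requires the global semi-transitive property: if $v$ alternated with $u$ in the constructed word, one could recover a directed path from $v$ through out-neighbors ending at $u$, and the no-shortcut condition would then force $vu$ to be an edge, contradicting the choice of $u$. Making this argument precise---in particular, choosing the insertion positions so that their correctness depends only on the orientation and not on the particular representant $w'$---is the delicate core of the proof, and is where I expect the bulk of the technical effort to lie.
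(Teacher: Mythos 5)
The paper does not prove this theorem; it is imported from \cite{HKP11,HKP16}, so your attempt has to be measured against the proof in those sources. Your forward direction is essentially the standard one, with one omission: the ``positional constraints'' argument that propagates alternation along $v_1\rightarrow\cdots\rightarrow v_k$ (the $m$-th occurrence of $v_1$ precedes the $m$-th occurrence of $v_2$, \dots, precedes the $m$-th occurrence of $v_k$, which precedes the $(m+1)$-st occurrence of $v_1$) only runs cleanly when the word is uniform. You should first invoke Theorem~\ref{equiv-thm} to replace $w$ by a $k$-uniform representant and only then define the first-occurrence orientation; with unequal multiplicities the interleaving bookkeeping at the ends of the word breaks down. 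That is a fixable gap in rigor.

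The converse direction is where the real problem lies. Your insertion scheme is not well defined and, taken literally, fails: if $v$ has $d$ out-neighbours and $w'$ is $k$-uniform, then ``one copy of $v$ just before each occurrence of an out-neighbour'' produces $kd$ copies of $v$, and no placement of the copies of $v$ can make $v$ alternate with \emph{every} out-neighbour unless the occurrences of the out-neighbours in $w'$ can be partitioned into consecutive blocks each containing exactly one occurrence of each, i.e.\ unless $w'$ restricted to the out-neighbourhood is a concatenation of permutations. Semi-transitivity does guarantee that the out-neighbourhood of a source carries a transitive orientation (any directed path between two out-neighbours closes up by the no-shortcut condition), hence is permutationally representable by Theorem~\ref{Kit-Sei-thm}; but the word $w'$ handed to you by the induction hypothesis need not realize the out-neighbourhood in that permutation form. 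Manufacturing a representant of $G-v$ with this extra structure is precisely the technical heart of the argument in \cite{HKP16}, where the construction doubles the multiplicity of every letter at each step (yielding a $2(n-\kappa)$-uniform representant, $\kappa$ the clique number) and runs the induction on $n-\kappa$ rather than on $n$. Your closing paragraph correctly locates the difficulty but does not resolve it, and the sub-argument that accidental alternation of $v$ with a non-neighbour $u$ would ``recover a directed path from $v$ to $u$'' is also unsubstantiated: alternation in the spliced word does not by itself produce a directed path in the orientation. As it stands, the hard direction is a plan rather than a proof.
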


A corollary to Theorem~\ref{key-thm} is the useful fact that any $3$-colorable graph is word-representable.

\subsection{Comparability graphs and permutational representation}

An orientation of a graph is {\em transitive} if the presence of edges $u\rightarrow v$ and $v\rightarrow z$ implies the presence of the edge $u\rightarrow z$.  An unoriented graph is a {\em comparability graph} if it admits a transitive orientation. A graph $G=(V,E)$ is {\em permutationally representable} if it can be represented by a word of the form $p_1\cdots p_k$ where $p_i$ is a permutation. 

The following theorem is an easy corollary of the fact that any partially ordered set can be represented as intersection of linear orders, and that a linear order can be represented by a permutation.

\begin{theorem}[\cite{KS08}]\label{Kit-Sei-thm} A graph is permutationally representable if and only if it is a comparability graph.\end{theorem}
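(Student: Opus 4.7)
The plan is to prove both directions by exploiting a single elementary observation about concatenations of permutations: if $w=p_1p_2\cdots p_k$ with each $p_i$ a permutation of $V$, then two letters $x,y\in V$ alternate in $w$ if and only if $x$ and $y$ appear in the same relative order within every $p_i$. This is immediate since the subword of $w$ on the alphabet $\{x,y\}$ consists of $k$ consecutive length-$2$ blocks, each being either $xy$ or $yx$, and this concatenation alternates precisely when all $k$ blocks agree.

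For the ``only if'' direction, I would take a permutational representant $w=p_1\cdots p_k$ of $G$ and define an orientation by $x\to y$ whenever $xy\in E$ and $x$ precedes $y$ in $p_1$. By the observation above, for every edge $xy$ the precedence is the same in all $p_i$, so the orientation is well-defined. Transitivity then follows in two short steps: if $x\to y$ and $y\to z$ are edges, then $x$ precedes $y$ precedes $z$ in each $p_i$, hence $x$ and $z$ appear in the same order in every $p_i$; by the observation, they alternate in $w$, so $xz\in E$, and our orientation orients it as $x\to z$. This exhibits $G$ as a comparability graph.

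For the ``if'' direction, assume $G$ admits a transitive orientation, which turns $G$ into the comparability graph of a poset $P=(V,<)$. Here I would invoke the Dushnik--Miller theorem (the standard fact quoted right before the statement) that $P$ can be written as the intersection of a finite family of linear extensions $L_1,\ldots,L_k$. Each $L_i$ is a total order on $V$ and so corresponds to a permutation $p_i$ of $V$; set $w:=p_1p_2\cdots p_k$. By construction, two elements $x,y$ are comparable in $P$ iff they are ordered consistently in every $L_i$, iff $x,y$ appear in the same relative order in every $p_i$, iff they alternate in $w$ by the observation. Since comparability in $P$ is exactly adjacency in $G$, the word $w$ is a word-representant of $G$, and by construction it is permutational.

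The main obstacle, as far as I can see, is essentially absent for this theorem once Dushnik--Miller is available off the shelf; the only genuine content is the alternation-vs-consistent-order observation, which deserves to be stated cleanly as a lemma since it will be invoked in both directions. Care is needed only in two bookkeeping points: verifying that every letter of $V$ occurs in $w$ (automatic since each $p_i$ is a full permutation), and checking that the orientation defined in the forward direction is a genuine orientation of $G$ (i.e.\ every edge gets exactly one direction), which is immediate because precedence in $p_1$ is a total order on $V$.
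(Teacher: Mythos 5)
Your proof is correct and follows essentially the route the paper indicates: the paper offers no proof of its own beyond the remark that the theorem is an easy corollary of representing a poset as an intersection of linear orders (Dushnik--Miller) with each linear order realized as a permutation, which is exactly your ``if'' direction. Your alternation-vs-consistent-order lemma and the resulting ``only if'' argument are the standard and correct way to complete the equivalence.
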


Permutational representation of a graph is a special case of uniform representation, to be discussed next.

\subsection{Uniform representations}

A word $w$ is {\em $k$-uniform} if each letter in $w$ occurs $k$ times. For example, the word 342321441231 is 3-uniform, while 43152 is a 1-uniform word (a permutation).  A graph $G$ is {\em $k$-word-representable}, or {\em $k$-representable} for brevity, if there exists a $k$-uniform word $w$ representing it. We say that $w$ {\em $k$-represents}  $G$. A somewhat surprising fact establishes equivalence of word-representability and uniform word-representability:

\begin{theorem}[\cite{KP08}]\label{equiv-thm} A graph is word-representable iff it is $k$-representable for some $k$. \end{theorem}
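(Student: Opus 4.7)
The ``if'' direction is immediate: a $k$-uniform word representing $G$ is in particular a word representing $G$, so every $k$-representable graph is word-representable.

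For the ``only if'' direction, let $w$ represent $G$ over alphabet $V$. I would first establish the \emph{prepending lemma}: if $p=p_1p_2\cdots p_n$ is the permutation of $V$ listing the letters in order of their first occurrence in $w$, then $p\cdot w$ also represents $G$. The proof is pairwise: for distinct $x,y\in V$ with $x$ preceding $y$ in $p$, the projection of $p$ onto $\{x,y\}$ is $xy$ and the subsequence $s$ of $w$ on $\{x,y\}$ begins with $x$. The subsequence of $p\cdot w$ on $\{x,y\}$ is then $xy\cdot s$, which extends an alternating $s$ into another alternating word, and which preserves any non-alternation defect ($xx$ or $yy$) already present in $s$. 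A symmetric \emph{appending lemma} yields that $w\cdot \bar p$ represents $G$, where $\bar p$ is the permutation of $V$ in order of last occurrence in $w$.

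Iterating both lemmas produces words $p^a\cdot w\cdot \bar p^{\,b}$ representing $G$, in which each letter's number of occurrences is raised uniformly by $a+b$. The main obstacle---and the genuine content of the theorem---is that this uniform padding cannot equalize occurrence counts that initially differ. The additional ingredient I would use is a \emph{targeted insertion lemma}: a single letter $x$ may be prepended to the current word whenever every neighbor of $x$ in $G$ has its first occurrence in that word before $x$'s (with the dual statement for appending a single $x$ on the right). A direct check shows that under this condition the projection onto $\{x,y\}$ for a neighbor $y$ gains an extra $x$ at a position that only extends the alternation, while for a non-neighbor the existing defect is preserved. Because the first-occurrence order on $V$ induces an acyclic orientation of $G$, at every stage at least one vertex is a sink and is therefore eligible for prepending.

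To finish, the plan is to alternate among reversal (which also preserves representation, since alternation is symmetric under reversal and so reverses the roles of first- and last-occurrence orders), full-permutation paddings, and targeted single-letter insertions, arranging matters so that each letter whose count is below the current maximum eventually becomes a sink in either the first- or last-occurrence orientation and can have its count raised by one. Termination would be argued by induction on a suitably chosen complexity measure on the vector of occurrence counts, exploiting that reversal swaps the two kinds of sinks while uniform padding refreshes the pool of candidates, ensuring every deficient letter is eventually eligible for a targeted insertion. After finitely many steps every letter attains a common count $k$, giving the required $k$-uniform word representing $G$.
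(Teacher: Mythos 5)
The paper does not actually prove this statement (it is quoted from \cite{KP08}), so I am comparing your argument with the standard proof. Your preliminary lemmas are all correct: prepending the initial permutation, appending the final permutation, and prepending a single letter $x$ all of whose neighbours first-occur before $x$ each preserve the represented graph, and you are right that uniform padding alone cannot equalize multiplicities. The gap is in the last step. You never establish that a deficient letter can always be made eligible for a targeted insertion; ``arranging matters so that each letter whose count is below the current maximum eventually becomes a sink'' and ``induction on a suitably chosen complexity measure'' are statements of intent, not arguments. Worse, the two tools you propose for the arranging do not do what you claim: prepending the initial permutation $p$ to $w$ leaves the first-occurrence order of $pw$ identical to that of $w$, so it cannot ``refresh the pool of candidates'' for prepending; and reversal merely exchanges the first- and last-occurrence orders, so a letter that is neither a first-occurrence sink nor a last-occurrence source is helped by neither operation. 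As written, there is no termination guarantee.

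The missing idea is an elementary but essential consequence of alternation: if $x$ and $y$ are adjacent and $x$ occurs strictly more often than $y$ in $w$, then $w$ restricted to $\{x,y\}$ alternates, hence the multiplicities differ by exactly one and the restriction must both begin and end with $x$; in particular $x$ first-occurs before $y$. Consequently, if $y$ has globally minimal multiplicity $m$ and is chosen as a sink of the (acyclic) first-occurrence orientation restricted to the letters of multiplicity $m$, then \emph{every} neighbour of $y$ first-occurs before $y$: higher-multiplicity neighbours are forced to, equal-multiplicity neighbours do by the choice of $y$, and lower-multiplicity neighbours do not exist. So $y$ is genuinely insertable by your own targeted-insertion lemma, and repeating this $\sum_x (k-m_x)$ times (with $k$ the maximal multiplicity, which never changes) yields a $k$-uniform representant. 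The proof in \cite{KP08} packages the same observation more efficiently: with $M$ the set of letters of maximal multiplicity, one prepends in a single step the subword of the initial permutation consisting of the letters \emph{not} in $M$; the forced-order fact above is exactly what makes this partial prepend safe across edges joining $M$ to its complement, and iterating raises all multiplicities to $k$. Without some form of this observation your construction is incomplete.
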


Thus, in the study of word-representable graphs, word-representants can be assumed to be uniform. {\em Graph's representation number} is the {\em least} $k$ such that the graph is $k$-representable. For non-word-representable graphs, we let $k=\infty$. It is known \cite{HKP16} that the upper bound on a shortest uniform word-representant for a graph $G$ on $n$ vertices is essentially $2n^2$, that is, one needs at most $2n$ copies of each letter to represent $G$. We let $\mathcal{R}(G)$ denote $G$'s representation number  and $\mathcal{R}_k=\{ G : \mathcal{R}(G)=k\}$.

The class of complete graphs is clearly the class of graphs with representation number 1. Further, the class of graphs with representation number $2$ is precisely the class of {\em circle graphs}, that is, the {\em intersection graphs} of sets of chords of a circle \cite{HKP11}. Unlike the cases of graphs with representation numbers 1 or 2, no characterization of graphs with representation number 3, or higher, is known. However, there is a number of interesting results on graphs with representation numbers higher than 2, some of which we mention next (see \cite{K17} for references to the original sources in relation to the results, and for more results in this direction).

The representation number of the {\em Petersen graph} and any {\em prism} is 3. Also, for every graph $G$ there are infinitely many $3$-representable graphs $H$
that contain $G$ as a {\em minor}. Such a graph $H$ can be obtained from $G$ by subdividing {\em each} edge into {\em any} number of, but {\em at least} three edges. 

\subsection{Graphs with high representation number}

As for graphs with high representation number, only {\em crown graphs} and graphs $G_n$ based on them (see the definitions below) were known until this paper; Figure~\ref{9-4} gives an example of another such graph.  A {\em crown graph} (also known as a {\em cocktail party graph}) $H_{n,n}$ is obtained from the complete bipartite graph $K_{n,n}$ by removing a {\em perfect matching}. That is, $H_{n,n}$ is obtained from $K_{n,n}$ by removing  $n$ edges such that {\em each} vertex was incident to {\em exactly one} removed edge. See Figure~\ref{crown-graphs-fig} for examples of crown graphs. 
 
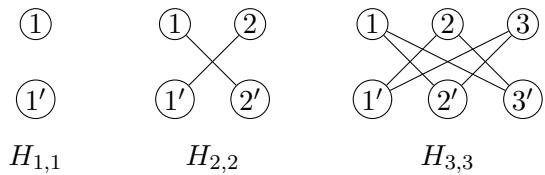
\begin{figure}
\begin{center}

\begin{tabular}{ccccc}

\begin{tikzpicture}[node distance=1cm,auto,main node/.style={circle,draw,inner sep=1pt,minimum size=5pt}]

\node[main node] (1) {1};
\node[main node] (2) [below of=1] {$1'$};
\node (a) [below of=2,yshift=2mm] {$H_{1,1}$};


\end{tikzpicture}

&

\ \ \ 

&

\begin{tikzpicture}[node distance=1cm,auto,main node/.style={circle,draw,inner sep=1pt,minimum size=5pt}]

\node[main node] (1) {1};
\node[main node] (2) [below of=1] {$1'$};
\node[main node] (3) [right of=1] {2};
\node[main node] (4) [below of=3] {$2'$};
\node (a) [below of=2,yshift=2mm,xshift=5mm] {$H_{2,2}$};

\path
(1) edge node  {} (4);

\path
(3) edge node  {} (2);

\end{tikzpicture}

&

\ \ \ 

&

\begin{tikzpicture}[node distance=1cm,auto,main node/.style={circle,draw,inner sep=1pt,minimum size=5pt}]

\node[main node] (1) {1};
\node[main node] (2) [below of=1] {$1'$};
\node[main node] (3) [right of=1] {2};
\node[main node] (4) [below of=3] {$2'$};
\node[main node] (5) [right of=3] {3};
\node[main node] (6) [below of=5] {$3'$};
\node (a) [below of=4,yshift=2mm] {$H_{3,3}$};

\path
(1) edge node  {} (4)
     edge node  {} (6);

\path
(3) edge node  {} (2)
     edge node  {} (6);

\path
(5) edge node  {} (2)
     edge node  {} (4);

\end{tikzpicture}

\vspace{-4mm}

\ 

\end{tabular}

\end{center}
\vspace{-2mm}
\caption{Crown graphs}\label{crown-graphs-fig}
\end{figure}

By Theorem~\ref{Kit-Sei-thm}, $H_{n,n}$ can be represented by a concatenation of permutations, because $H_{n,n}$ is a comparability graph (to see this, just orient all edges from one part to the other). In fact, $H_{n,n}$ is known to require $n$ permutations to be represented (the maximum possible amount for a comparability graph on $2n$ vertices by the well known theorem on the poset dimensions by Hiraguchi~\cite{H51}). However, we can provide a shorter representation for $H_{n,n}$, to be discussed next, which is still long (linear in $n$). 

Note that $H_{1,1}\in\mathcal{R}_2$. Further, $H_{2,2}\neq K_4$, the complete graph on four vertices, and thus $H_{2,2}\in\mathcal{R}_2$ because it cannot be represented by a permutation but can be 2-represented by $121'2'212'1'$. Also, $H_{3,3}=C_6$, a {\em cycle graph}, which belongs to $\mathcal{R}_2$ as is shown, e.g.\ in \cite{K17,KL15}. Finally, $H_{4,4}\in\mathcal{R}_3$ because  $H_{4,4}$ is a prism (it is the $3$-dimensional cube). The following theorem gives the representation number $\mathcal{R}(H_{n,n})$ in the remaining cases.

\begin{theorem}[\cite{G16}]\label{crown-thm-main} If $n\geq 5$, then the representation number of $H_{n,n}$ is $\lceil n/2 \rceil$.\end{theorem}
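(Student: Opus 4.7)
The plan is to prove separately the upper bound $\mathcal{R}(H_{n,n})\le \lceil n/2\rceil$ and the lower bound $\mathcal{R}(H_{n,n})\ge \lceil n/2\rceil$.

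For the upper bound I would exhibit an explicit $\lceil n/2\rceil$-uniform word that represents $H_{n,n}$. A natural candidate is a concatenation of $\lceil n/2\rceil$ ``rounds'', each of which is essentially a permutation of the $2n$ letters. The rounds are designed so that, for every non-matching cross pair $i,j'$ with $i\neq j$, the relative order of $i$ and $j'$ flips between consecutive rounds, producing a globally alternating subword; for every matching pair $i,i'$, the two letters keep the same relative order in every round, producing a non-alternating subword of the form $ii'ii'\cdots$ or its reverse; and for any two letters on the same side of the bipartition, the construction automatically yields a non-alternating subword. Verifying these conditions reduces to direct inspection of the rounds, with a small amount of extra bookkeeping required when $n$ is odd to arrange the final ``half round'' without spoiling alternations.

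For the lower bound, I would assume for contradiction that $w$ is a $k$-uniform word representing $H_{n,n}$ with $k<\lceil n/2\rceil$ and extract structure from $w$. Fix a vertex $i\in\{1,\ldots,n\}$; since $i$ is adjacent to every $j'$ with $j\neq i$, the $k$ occurrences of $i$ split $w$ into $k+1$ blocks, and each of the $k-1$ interior blocks must contain exactly one copy of every letter $j'$ with $j\neq i$. Thus each interior block defines a permutation $\pi_t$ of $\{\,j':j\neq i\,\}$ (possibly interleaved with other letters). The non-adjacency of any pair of primed letters $j',\ell'$ with $j,\ell\neq i$ then constrains the sequence $(\pi_1,\ldots,\pi_{k-1})$: restricted to $\{j',\ell'\}$, and even after accounting for the at most two extra copies that can sit in the outermost blocks, $w$ must fail to alternate, which forces some consecutive pair $\pi_t,\pi_{t+1}$ (or an outer block together with $\pi_1$ or $\pi_{k-1}$) to reverse the order of $j'$ and $\ell'$. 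Aggregating this requirement over all $\binom{n-1}{2}$ unordered pairs of primed letters, and also combining with the analogous analysis obtained by swapping the two sides of the bipartition, should yield $k\ge \lceil n/2\rceil$.

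The main obstacle is the last combinatorial step of the lower bound. By Hiraguchi's theorem the Dushnik--Miller dimension of $H_{n,n}$ is $n$, so a purely permutational argument would only give the bound $n$; the factor-of-two improvement relies on exploiting that the blocks between consecutive copies of $i$ can be longer than a single permutation, and that the two outer blocks contribute a constant amount of freedom that is negligible in the count. Making this pigeonhole argument precise, so that the reversal condition for $\binom{n-1}{2}$ pairs forces $k\ge\lceil n/2\rceil$ on the nose (rather than $\lceil(n-c)/2\rceil$ for some constant $c$), is where the real work lies.
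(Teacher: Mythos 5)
The paper does not prove this statement at all --- it is quoted from \cite{G16} as a known result --- so there is no in-paper argument to compare against; judged on its own terms, your sketch has genuine gaps in both directions. For the upper bound, the construction you describe (a concatenation of $\lceil n/2\rceil$ ``rounds, each of which is essentially a permutation of the $2n$ letters'') cannot work: a concatenation of permutations is a permutational representation, and the paper itself records that $H_{n,n}$ requires $n$ permutations (its comparability orientation is the standard example of an $n$-dimensional poset, by Hiraguchi). So $\lceil n/2\rceil$ permutations are provably insufficient, and the true word-representant must be genuinely non-permutational. Moreover, your stated alternation criterion is backwards: in a concatenation of permutations, two letters alternate precisely when they appear in the \emph{same} relative order in every round, and flipping the order between consecutive rounds \emph{destroys} alternation (it produces $\ldots xyyx\ldots$). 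The subword $ii'ii'\cdots$ that you offer as ``non-alternating'' is, by definition, alternating. So the design principles you give for the rounds would produce the complement of the intended graph.

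For the lower bound, the block decomposition is a sound starting point (each interior block between consecutive copies of $i$ does contain exactly one copy of each neighbour $j'$, $j\neq i$), but the concluding pigeonhole does not close. Each non-adjacent pair $\{j',\ell'\}$ indeed needs an order reversal somewhere (or a boundary defect), but a \emph{single} reversal between two consecutive interior blocks --- e.g.\ $\pi_{t+1}$ being the reverse of $\pi_t$ --- simultaneously supplies the required defect for \emph{all} $\binom{n-1}{2}$ pairs at once. Hence counting pairs against the $k-2$ gaps between interior blocks yields no lower bound on $k$ whatsoever, let alone $\lceil n/2\rceil$. The linear-in-$n$ lower bound must come from a finer interaction between the alternation constraints on the $n(n-1)$ adjacent cross pairs and the non-alternation constraints on same-side and matched pairs, which your sketch does not identify; you acknowledge this yourself, but it is not a routine detail --- it is the heart of the theorem.
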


\begin{conjecture}\label{conj1} $H_{n,n}$ has the highest representation number among all bipartite graphs on $2n$ vertices. \end{conjecture}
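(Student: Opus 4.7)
The plan is to show that every bipartite graph $G$ on $2n$ vertices (with $n\ge 5$) satisfies $\mathcal{R}(G)\le \lceil n/2\rceil$, matching the extremal value $\mathcal{R}(H_{n,n})$ from Theorem~\ref{crown-thm-main}. Write the bipartition as $V=A\cup B$ with $|A|+|B|=2n$, and direct all edges from $A$ to $B$. This orientation is transitive, so by Theorem~\ref{Kit-Sei-thm}, $G$ is a comparability graph and admits a permutational representation using a number of permutations equal to the dimension of the associated bipartite poset. Hiraguchi's theorem bounds this dimension by $n$, giving the crude estimate $\mathcal{R}(G)\le n$; the whole point of the conjecture is to save the remaining factor of two by exploiting bipartiteness.

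A natural first step is to reduce to the balanced case $|A|=|B|=n$: padding the smaller part with isolated vertices cannot raise $\mathcal{R}$ beyond $\max(2,\mathcal{R}(G))$, since an isolated letter $v$ can be accommodated by prepending $v^k$ to any $k$-uniform representation with $k\ge 2$. For the balanced case, one would attempt to construct a $\lceil n/2\rceil$-uniform representation of an arbitrary $G\subseteq K_{n,n}$ by generalising the explicit construction behind Theorem~\ref{crown-thm-main}. A plausible shape is $\lceil n/2\rceil$ ``double blocks,'' each of the form (permutation of $A$)(permutation of $B$), with the internal orderings chosen so that a cross pair $(x,y)\in A\times B$ alternates globally iff $xy\in E$; within-part non-adjacency is then automatic from the block shape. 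The combinatorial task reduces to certifying that $\lceil n/2\rceil$ such blocks always suffice, which one might approach by decomposing the bipartite complement of $G$ into matchings (via K\"onig's edge-colouring theorem) and designing each block to absorb two matchings at once, or inductively on $n$ by deleting a compatible non-edge pair and reassembling.

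The main obstacle is that the $H_{n,n}$ construction is already tight: a single perfect matching of cross non-edges demands $\lceil n/2\rceil$ copies of each letter. A naive ``one block per matching'' scheme therefore yields only an $O(n^2)$-copy bound, far from the target. A successful proof must exhibit genuine amortisation, showing that denser non-edge patterns admit proportionally more economical encodings so that the overall budget remains $\lceil n/2\rceil$. Making this amortisation work uniformly across all bipartite graphs on $2n$ vertices is, I expect, the heart of the difficulty and the likely reason the conjecture remains open despite its natural formulation.
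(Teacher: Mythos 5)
This statement is a \emph{conjecture} that the paper deliberately leaves open: no proof is given anywhere in the text, and the concluding remarks explicitly propose only to \emph{test} it computationally on larger bipartite graphs. Your submission is, by your own closing admission, a research plan rather than a proof, so the essential verdict is that nothing here establishes the claim. Since you ask for concrete gaps, here are the two largest ones.

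First, the reduction to the balanced case does not preserve the target bound. If $G$ has parts of sizes $a>b$ with $a+b=2n$, padding the smaller part with $a-b$ isolated vertices yields a balanced bipartite graph on $2a$ vertices, and the bound you would inherit from a balanced-case theorem is $\lceil a/2\rceil$, which for a lopsided bipartition can be as large as $n$ --- no better than the Hiraguchi/comparability bound you already set aside as too crude. Second, and more seriously, the entire construction of $\lceil n/2\rceil$ ``double blocks'' is asserted rather than carried out: you give no argument that the internal orderings of the two permutations in each block can be chosen simultaneously and consistently for an arbitrary pattern of cross non-edges, and the calibration of your matching-decomposition heuristic is off --- the crown graph itself shows that a \emph{single} perfect matching of non-edges already forces $\lceil n/2\rceil$ copies, so the number of matchings in the bipartite complement (via K\"onig) cannot be the parameter that controls $\mathcal{R}(G)$. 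The amortisation you correctly identify as ``the heart of the difficulty'' is precisely the missing mathematical content; until it is supplied, the statement remains what the paper says it is, an open conjecture supported only by the computational evidence for small $n$.
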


The graph $G_n$ is obtained from a crown graph $H_{n,n}$ by adding an apex (all-adjacent vertex). See Figure~\ref{G4-fig} for the graph $G_4$. It turns out that $G_n$ is the {\em worst} known word-representable graph in the sense that it requires the maximum number of copies of each letter to be represented, as recorded in the following theorem.  

\begin{theorem}[\cite{KP08}]\label{G-represent-thm} The representation number of $G_n$ is $\lfloor (2n+1)/2 \rfloor$.\end{theorem}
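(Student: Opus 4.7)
The plan is to establish $\mathcal{R}(G_n)=n$ by matching upper and lower bounds.

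For the upper bound $\mathcal{R}(G_n)\le n$, I will exhibit an explicit $n$-uniform word representing $G_n$. Denoting the apex by $a$, I write the word in the form $P_0\,a\,P_1\,a\,P_2\,a\cdots a\,P_{n-1}\,a\,P_n$, so the $n$ copies of $a$ split the word into $n+1$ gaps. I partition the $2n$ non-apex vertices into disjoint sets $L$ and $R$ in such a way that every letter in $L$ occurs once in each of $P_0,\ldots,P_{n-1}$ and every letter in $R$ occurs once in each of $P_1,\ldots,P_n$; this automatically forces $a$ to alternate with every non-apex letter. Each middle gap $P_r$ ($1\le r\le n-1$) is then a permutation of all $2n$ non-apex letters, while $P_0$ and $P_n$ are permutations of $L$ and $R$ respectively. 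The essential step is a careful choice of the partition and the gap orderings so that (i) in every middle gap, each edge $uv$ of $H_{n,n}$ with $u\in L$, $v\in R$ has $v$ placed before $u$ (ensuring $u$ and $v$ alternate in $w$), and (ii) each of the $n$ matching non-edges $(i,i')$ has its required non-alternation witnessed, either by a controlled inversion in some middle gap when $(i,i')$ is split across $L$ and $R$, or by mismatched orderings between an end gap and a middle gap when $(i,i')$ lies entirely in $L$ or in $R$. A direct verification then checks all alternation conditions.

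For the lower bound $\mathcal{R}(G_n)\ge n$, I assume a $k$-uniform representation $w$ of $G_n$ and show $k\ge n$. The $k$ copies of the apex $a$ divide $w$ into $k+1$ gaps $P_0,\ldots,P_k$. Each non-apex letter occurs exactly $k$ times and alternates with $a$, hence occupies $k$ consecutive gaps; this naturally partitions the non-apex alphabet into letters of type $L$ (appearing in $P_0,\ldots,P_{k-1}$) and of type $R$ (in $P_1,\ldots,P_k$). Each middle gap $P_r$ is therefore a permutation of all $2n$ non-apex letters, subject to the constraint that $v$ precedes $u$ whenever $uv$ is an edge of $H_{n,n}$ with $u\in L$, $v\in R$. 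Focusing on the $n$ matching non-edges $(i,i')$: each imposes a handling requirement that some gap exhibits a non-alternating order for that pair. In a middle gap, handling a split pair $(i,i')$ requires reversing the forced $L$-before-$R$ order for that pair, and two simultaneous reversals may be forbidden when the reversed edges together with the forced cross-edge orderings create a cycle. A careful case analysis then shows that the handling requirements for the $n$ matching non-edges cannot be met with fewer than $n-1$ middle gaps, yielding $k-1\ge n-1$ and hence $k\ge n$.

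The main obstacle is the lower-bound argument, which must be made uniform over every possible partition of the non-apex vertices into $L$ and $R$. For the naive bipartition $L=\{1,\ldots,n\}$, $R=\{1',\ldots,n'\}$, every two matching pairs conflict via cross edges, so the argument is essentially immediate; the difficulty lies with alternative partitions that reduce the number of cross edges and thus the number of cycle-forbidden simultaneous reversals, allowing a single middle gap to handle more matching pairs at once. One must show that any such relaxation is compensated by same-side matching pairs demanding mismatched orderings across gaps, so that the total number of gaps never drops below $n$. Orchestrating this balance uniformly across all partitions is the central combinatorial challenge.
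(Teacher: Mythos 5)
The paper does not actually prove this theorem (it is quoted from \cite{KP08}), so your argument has to stand on its own, and as written it does not: both halves are left as declarations of intent. For the upper bound you never exhibit the $n$-uniform word --- ``a careful choice of the partition and the gap orderings'' followed by ``a direct verification'' is a plan, not a construction. For the lower bound the situation is worse: the entire content is the claim that ``a careful case analysis shows'' the $n$ matching non-edges cannot be handled with fewer than $n-1$ middle gaps, uniformly over every possible $L/R$ partition of the non-apex letters, and your closing paragraph concedes that orchestrating this over all partitions is ``the central combinatorial challenge.'' That challenge is exactly the theorem; conceding it unsolved means there is no proof here. I would also flag that the target inequality you extract, $k-1\ge n-1$, is not obviously the right bookkeeping, since the two partial end gaps $P_0$ and $P_k$ can also witness non-alternations for same-side matching pairs, so even the skeleton of the case analysis is not pinned down.

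The missing idea that collapses all of this is a cyclic shift. For $k$-uniform words, if $w=uv$ represents $G$ then so does $vu$ (a standard fact from \cite{KP08}); shifting a $k$-uniform representant of $G_n$ so that it begins with the apex $a$ gives $w=a\,P_1\,a\,P_2\cdots a\,P_k$ with no gap $P_0$ at all. Then alternation of $a$ with every other letter forces each $P_i$ to be a full permutation of $V(H_{n,n})$, and $P_1P_2\cdots P_k$ permutationally represents $H_{n,n}$, so $k$ is at least the number of permutations needed for $H_{n,n}$, which is $n$ (the dimension of the standard example, via Hiraguchi's bound already invoked elsewhere in the paper). The whole $L$-versus-$R$ dichotomy you are fighting disappears. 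The upper bound is the same identity read backwards: take $n$ permutations $\pi_1,\dots,\pi_n$ whose concatenation represents $H_{n,n}$ (it is a comparability graph of dimension $n$) and check that $a\pi_1a\pi_2\cdots a\pi_n$ is an $n$-uniform representant of $G_n$. Your framework is a correct but much harder road; without the shift you would genuinely need the uniform-over-all-partitions analysis you did not supply.
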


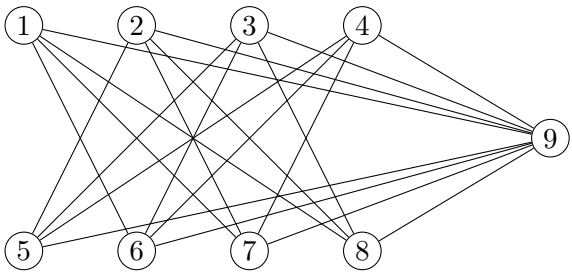
\begin{figure}
\begin{center}
\hspace{10mm} \begin{tikzpicture}
\node[circle,draw,inner sep=0pt,minimum width=5mm] (1) at (0,3) {$1$};
\node[circle,draw,inner sep=0pt,minimum width=5mm] (2) at (1.5,3) {$2$};
\node[circle,draw,inner sep=0pt,minimum width=5mm] (3) at (3,3) {$3$};
\node[circle,draw,inner sep=0pt,minimum width=5mm] (4) at (4.5,3) {$4$};
\node[circle,draw,inner sep=0pt,minimum width=5mm] (5) at (0,0) {$5$};
\node[circle,draw,inner sep=0pt,minimum width=5mm] (6) at (1.5,0) {$6$};
\node[circle,draw,inner sep=0pt,minimum width=5mm] (7) at (3,0) {$7$};
\node[circle,draw,inner sep=0pt,minimum width=5mm] (8) at (4.5,0) {$8$};
\node[circle,draw,inner sep=0pt,minimum width=5mm] (9) at (7,1.5) {$9$};
\draw (1) -- (6);
\draw (1) -- (7);
\draw (1) -- (8);
\draw (2) -- (5);
\draw (2) -- (7);
\draw (2) -- (8);
\draw (3) -- (5);
\draw (3) -- (6);
\draw (3) -- (8);
\draw (4) -- (5);
\draw (4) -- (6);
\draw (4) -- (7);
\draw (1) -- (9);
\draw (2) -- (9);
\draw (3) -- (9);
\draw (4) -- (9);
\draw (5) -- (9);
\draw (6) -- (9);
\draw (7) -- (9);
\draw (8) -- (9);

\end{tikzpicture}

\end{center}
\vspace{-2mm}
\caption{The graph $G_4$ with representation number 4}\label{G4-fig}
\end{figure}


It is unknown whether there exist graphs on $n$ vertices with representation number between $\lfloor n/2 \rfloor$ and essentially $2n$ (the known upper bound), but one has the following conjecture.  

\begin{conjecture}\label{conj2} $G_n$ has the highest representation number among all graphs on $2n+1$ vertices. \end{conjecture}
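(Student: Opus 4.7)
The plan is a strong induction on $n$, splitting the inductive step according to whether $G$ has an apex (a vertex adjacent to all others). This choice mirrors the construction of $G_n$ itself as a crown graph with an apex attached, so the conjecture can be read as asserting that attaching an apex is the most costly single-vertex operation one can perform. The base case $n \leq 5$ is covered directly by the exhaustive computational enumeration described in the abstract, which examines all connected graphs on up to 11 vertices.

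For the inductive step on a graph $G$ on $2n+1$ vertices, the apex case is the cleanest to set up: if $v$ is adjacent to every other vertex, then in any uniform representation $v$ must alternate with every other letter, which tightly constrains where copies of $v$ can be placed. I would aim to show $\mathcal{R}(G) \leq \mathcal{R}(G \setminus v) + c$ for small $c$, and then bound $\mathcal{R}(G \setminus v)$, a graph on $2n$ vertices, by a suitable strengthening of Conjecture~\ref{conj1}. The extremal case $G = G_n$ shows tightness: by Theorem~\ref{G-represent-thm}, $\mathcal{R}(G_n) = n$, while Theorem~\ref{crown-thm-main} gives $\mathcal{R}(H_{n,n}) = \lceil n/2 \rceil$. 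In the non-apex case one would start from a semi-transitive orientation of $G$ (Theorem~\ref{key-thm}) and try a canonical construction of a uniform representation that exploits, for each vertex, the presence of at least one non-neighbor: that non-neighbor provides room to \emph{pad} the word with additional copies of the vertex without breaking the alternation required by any incident edge.

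The main obstacle is precisely this non-apex case. The best known transformation from a semi-transitive orientation into a uniform representation uses roughly $2(2n+1)$ copies of each letter, about four times the conjectured bound of $n$. Bridging that gap seems to demand a genuinely new word-construction technique from semi-transitive orientations, together with a structural reason why the absence of a universal vertex essentially halves the worst-case cost. This is why the statement remains a conjecture rather than a theorem; the contribution of the present paper is to supply strong computational evidence in its favor rather than to close the argument.
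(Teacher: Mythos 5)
This statement is an open conjecture: the paper offers no proof, only computational confirmation for graphs on at most $9$ vertices (i.e.\ $n\leq 4$) together with a reason to doubt it, namely the non-comparability graph $J_4$ of Figure~\ref{9-4}. Your proposal, by your own admission in the last sentence, is not a proof either, so the honest answer is that you have correctly identified the statement as unproven; but the specific plan you sketch has concrete defects beyond the admitted open step. First, the base case is misattributed: the exhaustive run to $11$ vertices counts non-word-representable graphs, whereas representation numbers are only computed up to $9$ vertices (Table~\ref{tab-k-repr}), so even $n=5$ is not covered by the data. Second, the apex case leans on ``a suitable strengthening of Conjecture~\ref{conj1},'' which is itself open, and which in any case concerns only \emph{bipartite} graphs on $2n$ vertices --- the graph $G\setminus v$ obtained by deleting an apex from an arbitrary $G$ need not be bipartite, so Conjecture~\ref{conj1} could not bound $\mathcal{R}(G\setminus v)$ even if proved. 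Third, the additive bound $\mathcal{R}(G)\leq\mathcal{R}(G\setminus v)+c$ for an apex $v$ would have to be reconciled with the known extremal values: $\mathcal{R}(H_{n,n})=\lceil n/2\rceil$ while $\mathcal{R}(G_n)=n$, so adding one apex roughly \emph{doubles} the representation number rather than increasing it by a constant; any inductive scheme built on a small additive increment is therefore structurally wrong at the very example that is supposed to be extremal.

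The only theorem in this vicinity that the paper actually proves is the much weaker statement that $G_n$ is extremal among \emph{comparability} graphs on $2n+1$ vertices, and that proof is not inductive at all: it applies Theorem~\ref{Kit-Sei-thm} to represent a comparability graph by a concatenation of permutations and then invokes Hiraguchi's bound on poset dimension to cap the number of permutations at $n$. If you want a provable statement, that is the route; the general conjecture remains out of reach precisely because, as you note, the passage from a semi-transitive orientation to a uniform word costs about $2(2n+1)$ copies per letter, and because $J_4$ hints that non-comparability graphs may eventually beat $G_n$.
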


It is easy to see that $G_n$ is a comparability graph (just make the apex to be a source, or a sink, and orient the remaining crown graph from one part to the other).  Surprisingly, the following result on $G_n$ does not seem to be recorded in the literature. 

\begin{theorem} $G_n$ has the highest representation number among all comparability graphs on $2n+1$ vertices. \end{theorem}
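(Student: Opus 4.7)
The plan is to sandwich $\mathcal{R}(G)$ between a Hiraguchi-style upper bound and the known value of $\mathcal{R}(G_n)$. Specifically, I will show that $\mathcal{R}(G) \leq \dim(P)$ whenever $G$ is the comparability graph of a poset $P$, then invoke Hiraguchi's theorem~\cite{H51} to obtain $\dim(P) \leq \lfloor |V|/2 \rfloor$, and finally quote Theorem~\ref{G-represent-thm} to see that $G_n$ saturates this bound.

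The only nontrivial ingredient is the inequality $\mathcal{R}(G) \leq \dim(P)$. To establish it, fix $d := \dim(P)$ and choose linear extensions $L_1, \ldots, L_d$ of $P$ whose intersection is $P$; let $p_i$ be the permutation of $V$ that lists its elements in the order of $L_i$, and form $w := p_1 p_2 \cdots p_d$. This $w$ is $d$-uniform, and for any two distinct vertices $x, y$ the projection of $w$ onto $\{x, y\}$ consists of $d$ ordered pairs, one per $p_i$, each equal to $xy$ or $yx$. Such a sequence alternates in $w$ iff all $d$ pairs are identical, i.e., iff $x$ and $y$ appear in the same relative order in every $L_i$, which in turn holds iff $x$ and $y$ are comparable in $P$, iff $xy \in E(G)$. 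Hence $w$ represents $G$, so $\mathcal{R}(G) \leq d$.

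Combining this with Hiraguchi's bound yields $\mathcal{R}(G) \leq \lfloor (2n+1)/2 \rfloor = n$ for every comparability graph $G$ on $2n+1$ vertices, and Theorem~\ref{G-represent-thm} shows that $G_n$ attains this value, finishing the argument. I foresee no real obstacle: the first step is just the natural extension of the construction underlying Theorem~\ref{Kit-Sei-thm} from a single permutation to $d$ permutations, and the remaining pieces are black-box citations. This short chain of inclusions is presumably the reason the statement has not previously been recorded in the literature.
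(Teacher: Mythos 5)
Your proposal is correct and follows essentially the same route as the paper's proof: permutational representation of a comparability graph via linear extensions (Theorem~\ref{Kit-Sei-thm}), Hiraguchi's bound $\dim(P)\leq\lfloor|P|/2\rfloor$, and Theorem~\ref{G-represent-thm} to see that $G_n$ attains the value $n$. The only difference is that you spell out the alternation argument behind $\mathcal{R}(G)\leq\dim(P)$, which the paper simply cites.
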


\begin{proof}

Let $G$ be a comparability graph on $2n+1$ vertices. By Theorem~\ref{Kit-Sei-thm}, $G$ can be represented by a concatenation of permutations, which is equivalent to representing the partially ordered set corresponding to $G$ by intersection of linear orders.
It is known \cite{H51} that for any finite poset $P$, the dimension of $P$ is at most half of the number of elements in $P$. Thus, the number of permutations required to represent $G$ cannot exceed $n$, which in turn implies that $\mathcal{R}(G)\leq n$ (dropping the requirement to represent $G$ permutationally, we can only shorten a word-representant). Thus, by Theorem~\ref{G-represent-thm}, $\mathcal{R}(G)\leq\mathcal{R}(G_n)$. \end{proof}

\subsection{Organization of the paper}

Our concern in this paper is word-representation of {\em connected}  graphs, because a graph is word-representable if and only if each of its connected components is word-representable \cite{KL15}. In Section~\ref{uniform-wr-sec} we explain our computational approach using satisfiability module theories (SMT) to study $k$-word-representable graphs and present the results obtained. In particular, we raise some concerns about Conjecture~\ref{conj2}, while confirming it for graphs on at most 9 vertices. 
In Section~\ref{nwr-sec} we present a complementary computational approach using constraint programming, enabling us count connected non-word-representable graphs. In particular, in Section~\ref{nwr-sec} we report that using 3 years of CPU time, we found out that 64.65\% of all connected graphs on 11 vertices are non-word-representable. Another important corollary of our results in Section~\ref{nwr-sec} is the correction of the published result \cite{K17,KL15} on the number of connected non-word-representable graphs on 9 vertices (see Table~\ref{table-constraint}).  
In Section~\ref{sec-3-semi-trans} we introduce the notion of a $k$-semi-transitive orientation refining the notion of a semi-transitive orientation, and show that 3-semi-transitively orientable graphs are not necessarily semi-transitively orientable. 
Finally, in Section~\ref{conc-sec} we suggest a few directions for further research and experimentation.

\section{Finding word-representants by SMT}\label{uniform-wr-sec}

\label{SMT-semi-transitive-section-if-not-merged}

How to find a $k$-uniform word-representation of a given graph $G = (V,E)$?
In this section we discuss how this can be done by means of {\em SMT}: {\em satisfiability modulo theories}. In particular, we focus on the {\em theory of linear inequalities}, and want to exploit
the fact that current SMT solvers are strong in establishing whether a Boolean formula composed from $\wedge$, $\vee$, $\neg$ and linear inequalities admits a solution, and if so, finds one. Here by a solution we mean a choice for the values of the variables such that the formula yields true; if such a solution exists the formula is called `{\em satisfiable}', and the solution is called a `{\em satisfying assignment}'. So, our goal is to find such a Boolean formula for which any solution corresponds to a $k$-uniform word-representation of a given graph. For doing so, we need a way to express the unknown $k$-uniform word of length $k n$, where $n=\# V$ is the number of vertices in the graph in question, by a number of variables.
This is done as follows.
Number the vertices from 1 to $n$, and represent a word $w$ that we are looking for by $kn$ integer variables
$A_{i,j}$, for $i=1,\ldots,n$, $j=1,\ldots,k$. The intended meaning of
$A_{i,j}$ is the position of the $j$-th occurrence of symbol $i$ in $w$, for $i=1,\ldots,n$, $j=1,\ldots,k$. For example,  for the following graph

\vspace{3mm}

\begin{center}
\begin{tikzpicture}
\node[circle,draw,inner sep=0pt,minimum width=5mm] (1) at (0,0) {$1$};
\node[circle,draw,inner sep=0pt,minimum width=5mm] (2) at (2,0) {$2$};
\node[circle,draw,inner sep=0pt,minimum width=5mm] (3) at (4,0) {$3$};
\draw (1) -- (2);
\draw (2) -- (3);
\end{tikzpicture}
\end{center}

\vspace{3mm}

\noindent the word $w = 132312$ is a 2-uniform word-representing the graph, and is expressed by the values
$A_{1,1} = 1$, $A_{1,2} = 5$, $A_{2,1} = 3$, $A_{2,2} = 6$, $A_{3,1} = 2$, $A_{3,2} = 4$.

Now, our formula is the conjunction of a number of requirements on these integer variables $A_{i,j}$ that all together describe a word $w$ representing a given graph $G = (V,E)$. These requirements are:

\begin{itemize}
\item $A_{i,j}>0$, for all $i=1,\ldots,n$, $j=1,\ldots,k$;
\item $A_{i,j} \leq kn$, for all $i=1,\ldots,n$, $j=1,\ldots,k$;
\item all $A_{i,j}$ are distinct (distinctness is a feature included in SMT format);
\item for all $i_1 i_2 \in E$,
\[ (A_{i_1,1} < A_{i_2,1} < A_{i_1,2} < A_{i_2,2} < \cdots < A_{i_1,j} < A_{i_2,j})\]
\[ \vee \; (A_{i_2,1} < A_{i_1,1} < A_{i_2,2} < A_{i_1,2} < \cdots < A_{i_2,j} < A_{i_1,j}); \]
\item for all $i_1 i_2 \not\in E$,
\[ \neg (A_{i_1,1} < A_{i_2,1} < A_{i_1,2} < A_{i_2,2} < \cdots < A_{i_1,j} < A_{i_2,j})\]
\[ \wedge \; \neg (A_{i_2,1} < A_{i_1,1} < A_{i_2,2} < A_{i_1,2} < \cdots < A_{i_2,j} < A_{i_1,j}). \].
\end{itemize}

So, for our graph above, the formula reads
\[ A_{1,1} > 0 \wedge A_{1,2} > 0 \wedge A_{2,1} > 0 \wedge A_{2,2} > 0 \wedge A_{3,1}  > 0 \wedge A_{3,2} > 0 \; \wedge \]
\[ A_{1,1} \leq 6 \wedge A_{1,2} \leq 6 \wedge A_{2,1} \leq 6 \wedge A_{2,2} \leq 6 \wedge A_{3,1}  \leq 6 \wedge A_{3,2} \leq 6 \; \wedge \]
\[ \mbox{distinct}(A_{1,1},A_{1,2},A_{2,1},A_{2,2},A_{3,1},A_{3,2}) \; \wedge \]
\[ ((A_{1,1} < A_{2,1} < A_{1,2} < A_{2,2}) \vee (A_{2,1} < A_{1,1} < A_{2,2} < A_{1,2})) \; \wedge \]
\[ ((A_{2,1} < A_{3,1} < A_{2,2} < A_{3,2}) \vee (A_{3,1} < A_{2,1} < A_{3,2} < A_{2,2})) \; \wedge \]
\[ \neg (A_{1,1} < A_{3,1} < A_{1,2} < A_{3,2}) \wedge \neg (A_{3,1} < A_{1,1} < A_{3,2} < A_{1,2})). \]
For the values $A_{1,1} = 1$, $A_{1,2} = 5$, $A_{2,1} = 3$, $A_{2,2} = 6$, $A_{3,1} = 2$, $A_{3,2} = 4$ this formula yields true, as is found
by the SMT solver Z3, yielding the 2-uniform word-representation $w = 132312$ of the graph.

Up to syntactic details (boolean operators are written as `not', `and', `or', all operators are written in prefix notation), it is exactly this formula on which
an SMT solver like Z3 \cite{z3} or YICES \cite{yices} can be applied, yielding `satisfiable', and the corresponding satisfying assignment gives our values of $A_{i,j}$.

We wrote a tool doing this in a way where the internal use of an SMT solver is hidden for the user. It is available on 

\centerline{{\tt http://www.win.tue.nl/}$\sim${\tt hzantema/reprnr.html}.}

\noindent The tool reads a graph and then tries to find a $k$-representation for $k=2,3,4,\ldots$ by building the formula as presented above and then calling an SMT solver. As soon as a satisfying
assignment is found, the computation stops and the resulting values are transformed to the corresponding $k$-uniform word-representation, which is returned to the user. The tool is available both for Windows (calling the SMT solver Z3) and for Linux (calling the SMT solver YICES), together with several examples. Typically, for graphs like the cube, the prism on the triangle, Petersen graph, and $G_4$ (see below), the $k$-uniform word representing the graph is found in a second or less.

\begin{table}
\begin{center}
\begin{tabular}{|c|r|r|r|r|r|r|}
\hline
\# of & \# of conn. & \multicolumn{5}{c|}{representation number} \\ \cline{3-7}
vertices & graphs & 1 & 2 & 3 & 4 & $> 4$ \\ \hline \hline
3 & 2 & 1 & 1 & 0 & 0 & 0 \\ \hline
4 & 6 & 1 & 5 & 0 & 0 & 0 \\ \hline
5 & 21 & 1 & 20 & 0 & 0 & 0 \\ \hline
6 & 112 & 1 & 109 & 1 & 0 & 1 \\ \hline
7 & 853 & 1 & 788 & 39 & 0 & 25 \\ \hline
8 & 11,117 & 1 & 8335 & 1852 & 0 & 929 \\ \hline
9 & 261,080 & 1 & 117,282 & 88,838 & 2 & 54,957 \\ \hline
\end{tabular}
\end{center}
\caption{Distribution of $k$-representable graphs on at most 9 vertices}\label{tab-k-repr}
\end{table}

As this tool works quite quickly, it is feasible to run it on a great number of graphs. In particular, we ran it on all connected graphs on $\leq 9$ vertices as they are available from \\ \centerline {{\tt http://users.cecs.anu.edu.au/}$\sim${\tt bdm/data/graphs.html.}} \\ The results are listed in Table~\ref{tab-k-repr}, where `representation number $> 4$' means that no 4-representation exists, so either the representation number is $> 4$, or the graph is not word-representable (for which the representation number is $\infty$). However, as these numbers coincide with the respective numbers in Table~\ref{table-constraint}, we conclude that only the latter occurs, and no word-representable graph exists on $\leq 9$ vertices with representation number $> 4$.

\begin{figure}
\begin{center}
\vspace{3mm}
\noindent\includegraphics[scale=0.27]{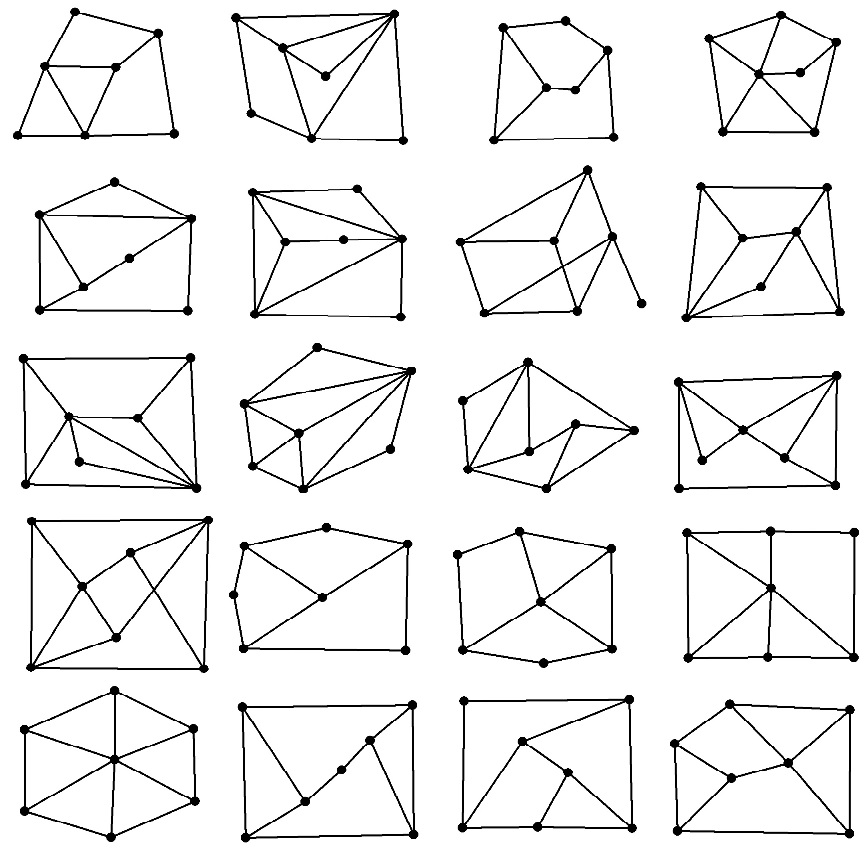} 
\includegraphics[scale=0.27]{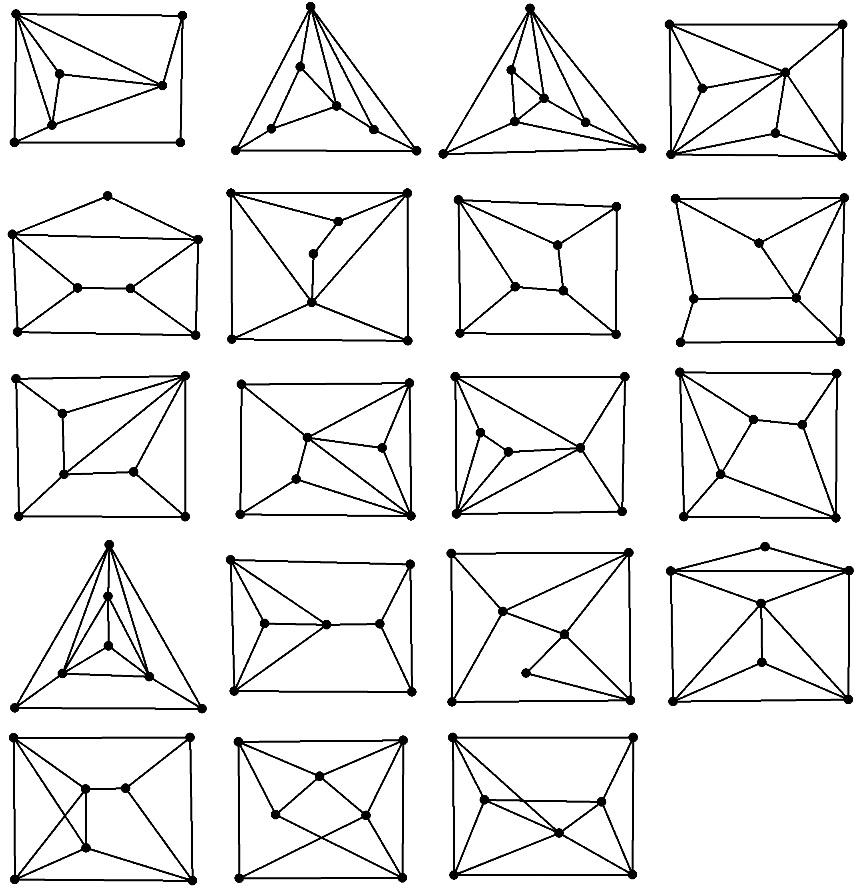} 
\vspace{3mm}
\end{center}
\vspace{-2mm}
\caption{The 39 graphs on 7 vertices with representation number 3 }\label{39-7-3}
\end{figure}

The single graph on 6 vertices with representation number 3 is the prism on the triangle; the single non-word-representable graph on 6 vertices is the wheel on 5 vertices.
The 39 graphs on 7 vertices with representation number 3 are given in Figure~\ref{39-7-3}.

The most surprising result in Table~\ref{tab-k-repr} is the two graphs on 9 vertices with representation number 4. One of them was known before, namely,
$G_4$ presented in Figure~\ref{G4-fig}, and it was believed to be the only graph on 9 vertices with representation number 4.  However, our computations have shown the existence of another such graph, namely the graph $J_4$ shown in Figure~\ref{9-4}. We note that $J_4$ is a non-comparability graph, which is easy to check, while $G_4$ is. This may suggest that Conjecture~\ref{conj2} might not be true, since there are many more non-comparability graphs than comparability graphs, and one may expect finding those of them that have higher representation number than $G_n$. Having said that, we were not able to extend the construction of $J_4$ (in a natural way) to more than 9 vertices. 


\begin{figure}
\begin{center}

\begin{tikzpicture}
\node[circle,draw,inner sep=0pt,minimum width=5mm] (1) at (2.5,4.6) {$1$};
\node[circle,draw,inner sep=0pt,minimum width=5mm] (2) at (0,0) {$2$};
\node[circle,draw,inner sep=0pt,minimum width=5mm] (3) at (5,0) {$3$};
\node[circle,draw,inner sep=0pt,minimum width=5mm] (4) at (2.5,2.7) {$4$};
\node[circle,draw,inner sep=0pt,minimum width=5mm] (5) at (1.5,1) {$5$};
\node[circle,draw,inner sep=0pt,minimum width=5mm] (6) at (3.5,1) {$6$};
\node[circle,draw,inner sep=0pt,minimum width=5mm] (7) at (1.5,2) {$7$};
\node[circle,draw,inner sep=0pt,minimum width=5mm] (8) at (3.5,2) {$8$};
\node[circle,draw,inner sep=0pt,minimum width=5mm] (9) at (2.5,0.5) {$9$};
\draw (1) -- (2);
\draw (1) -- (3);
\draw (2) -- (3);
\draw (4) -- (5);
\draw (4) -- (6);
\draw (5) -- (6);
\draw (1) -- (7);
\draw (2) -- (7);
\draw (4) -- (7);
\draw (5) -- (7);
\draw (1) -- (8);
\draw (3) -- (8);
\draw (4) -- (8);
\draw (6) -- (8);
\draw (2) -- (9);
\draw (3) -- (9);
\draw (5) -- (9);
\draw (6) -- (9);
\end{tikzpicture} \hspace{1cm}
\begin{tikzpicture}
\node[circle,draw,inner sep=0pt,minimum width=5mm] (1) at (0,2.5) {$1$};
\node[circle,draw,inner sep=0pt,minimum width=5mm] (2) at (1,3) {$2$};
\node[circle,draw,inner sep=0pt,minimum width=5mm] (3) at (1,2) {$3$};
\node[circle,draw,inner sep=0pt,minimum width=5mm] (4) at (4,2.5) {$4$};
\node[circle,draw,inner sep=0pt,minimum width=5mm] (5) at (3,3) {$5$};
\node[circle,draw,inner sep=0pt,minimum width=5mm] (6) at (3,2) {$6$};
\node[circle,draw,inner sep=0pt,minimum width=5mm] (7) at (2,5) {$7$};
\node[circle,draw,inner sep=0pt,minimum width=5mm] (8) at (2,0) {$8$};
\node[circle,draw,inner sep=0pt,minimum width=5mm] (9) at (2,2.5) {$9$};
\draw (1) -- (2);
\draw (1) -- (3);
\draw (2) -- (3);
\draw (4) -- (5);
\draw (4) -- (6);
\draw (5) -- (6);
\draw (1) -- (7);
\draw (2) -- (7);
\draw (4) -- (7);
\draw (5) -- (7);
\draw (1) -- (8);
\draw (3) -- (8);
\draw (4) -- (8);
\draw (6) -- (8);
\draw (2) -- (9);
\draw (3) -- (9);
\draw (5) -- (9);
\draw (6) -- (9);
\end{tikzpicture}

\end{center}
\vspace{-2mm}
\caption{The graph $J_4$ with representation number 4. It is shown in two ways to show different symmetries.}\label{9-4}
\end{figure}
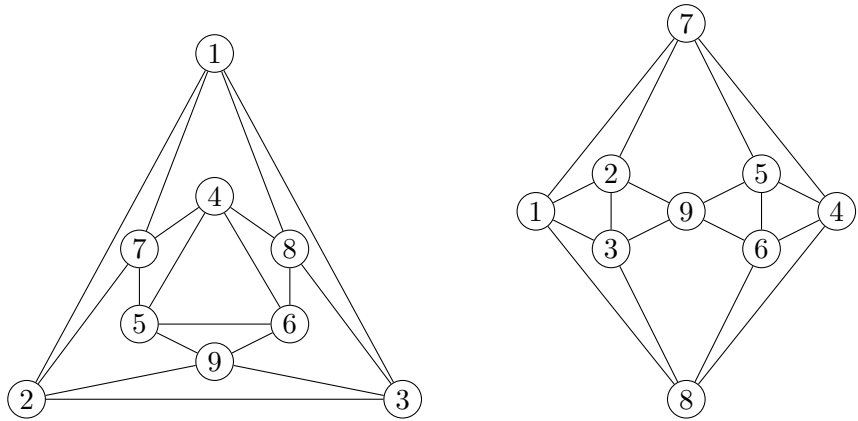

\section{Counting non-word-representable graphs using constraint programming}\label{nwr-sec}

Similarly to our studies of $k$-word-representable graphs, we performed large computations using constraint programming \cite{constraints-handbook} to count the numbers of non-word-representable connected graphs with up to 11 vertices.  To do this, we used the constraint modelling tool Savile Row \cite{Nightingale2017:automatically} and  the constraint solver Minion \cite{Gent2006:minion}.  These tools have been used successfully in the past to obtain novel enumerations of a variety of combinatorial structures including semigroups \cite{semigroups-10}, equidistant frequency permutation arrays \cite{efpa}, and S-crucial and bicrucial permutations
with respect to squares \cite{squarefree}.  

 Our starting point was to model the concept of word-representability in a way similar to that when using SMT in Section~\ref{SMT-semi-transitive-section-if-not-merged}.  However, here we use Theorem~\ref{key-thm} showing the equivalence between word-representability and semi-transitivity, so that semi-transitive orientations are now used to determine whether or not a graph is word-representable.  As with SMT in Section~\ref{sec-3-semi-trans}, we use a boolean $u_{ij}$ to indicate an undirected edge between $i$ and $j$, and a boolean $e_{ij}$ to indicate a directed edge from $i$ to $j$. Moreover, we use a boolean $t_{ij}$ to indicate the {\em transitive closure} of $e$, which is true when there is a path of directed edges from $i$ to $j$.  Most of the model expresses the appropriate linkages between these sets of variables.  For example, constraints in the model express that $t$ is the transitive closure of $e$.  The acyclicity of $e$ is elegantly expressed by each $t_{ii}$ being false,  i.e.\ no vertex being reachable from itself in the transitive closure.  The final constraint expresses the property of semi-transitivity.  This states that if two vertices are connected by a directed path, and there is an undirected edge between them, then all pairs of intermediate vertices must have a directed edge between them in the appropriate direction.  
 
The model we used is shown in full in Figure~\ref{eprime-wrg}.  There are three points of detail about the model which deserve mention.  
First, this model neither  check graphs for being connected, nor for being non-isomorphic to each other. 
This is not easy to do very efficiently in constraints, so instead we constructed a list of all connected undirected graphs with no two graphs being isomorphic, using the program \texttt{geng} \cite{geng}. 
Second, we originally modelled an undirected graph as an input to the constraint model, which was then checked for word-representability.  However, this proved to be very inefficient as the vast majority of the constraint modelling processes was the same for each graph.  Instead, we provide the constraint model with a list of graphs produced by \texttt{geng} and insist that the solution is one of those graphs. This is achieved in constraints using the `table' constraint, which can be propagated very efficiently \cite{DBLP:reference/fai/Bessiere06}.  As well as saving work at the modelling stage, it also provides the capability to save work at the solving stage. For example, if all graphs remaining for consideration contain a certain undirected edge $ij$, the  variable $u_{ij}$ can be set true immediately.  A major advantage of this approach is that it makes it particularly easy to parallelise the enumeration process, simply by splitting the list of distinct connected graphs into appropriately sized chunks.
Finally, the line `\texttt{branching on [u]}' tells the constraint tools that we only wish to solve the problem once for each different assignment of \texttt{u}, i.e. for each undirected graph.  Without this, any graph admitting more than one semi-transitive orientation would be repeated in the output, wasting both search time and necessitating extra work in removing duplicates.  

Results of our computations are shown in Table~\ref{table-constraint}. Note that in one case numbers are different to those previously reported. 
The true number of connected non-word-representable graphs on 9 vertices is 54,957, {\em not} 68,545 as was reported in \cite{K17,KL15} (which was a copy/paste mistake).

It is also interesting to identify minimal non-word-representable graphs of each size, i.e.\ graphs containing no non-word-representable strict induced subgraphs.  
To do this, we stored all non-word-representable graphs of each size. After computing with \texttt{geng} all possible graphs with one more vertex, we eliminate graphs containing one of the stored graphs as an induced subgraph.  
We did this with a simple constraint model which tries to find a mapping from the vertices of the induced subgraph to the vertices of the larger graph, and if successful discards the larger graph from consideration.
This enabled us to count all minimal non-word-representable graphs of each size up to $9$, which is shown in Table~\ref{table-constraint}. The filtering process we used was too inefficient to complete the cases  $n\geq 10$.

\begin{figure}
{\small
\begin{verbatim}
language ESSENCE' 1.0
given n : int              
given triangle_table : matrix indexed by [int(1..numgraphs),int(1..(n-1)*(n)/2)] 
                       of int(0,1)
letting LETTER be domain int(1..n)

find upper_triangle : matrix indexed by [int(1..((n-1)*n/2))] of int(0,1)
find u : matrix indexed by [LETTER, LETTER] of int(0,1) $ graph undirected edges
find e : matrix indexed by [LETTER, LETTER] of int(0,1) $ graph directed edges
find t : matrix indexed by [LETTER, LETTER] of int(0,1) $ transitive closure 
branching on [u]

such that
    $ the diagonal is empty
    forAll i : LETTER . u[i,i] = 0,
    $ the graph is undirected
    forAll i,j : LETTER . u[i,j] = u[j,i],
    
    $ linking u and the upper triangle
    forAll i,j : LETTER . i < j -> (u[i,j] = upper_triangle[n*(i-1)+j-((i+1)*i/2)]),
    $ the graph is one of the preprocessed graphs
    table(upper_triangle,triangle_table),

    $ linking e and u
    forAll i,j : LETTER . u[i,j] = 0 -> e[i,j]=0,
    forAll i,j : LETTER . u[i,j] = 1 -> ((e[i,j]=1) \/ e[j,i]=1),

    $ directed graph is irreflexive and antisymmetric
    forAll i : LETTER . e[i,i] = 0,
    forAll i,j : LETTER . i < j -> ( (e[i,j] = 0) \/ (e[j,i] = 0)),

    $ t is transitive closure of e and is acyclic
    forAll i,j : LETTER . (e[i,j] = 1) -> (t[i,j] = 1),    
    forAll i,j,k : LETTER . ( (t[i,j] = 1) /\ (t[j,k] = 1)) -> (t[i,k] = 1),
    forAll i : LETTER . t[i,i] = 0,

    $ semi transitive ordering    
    forAll i,k: LETTER . 
        ((t[i,k] = 1) /\ (u[i,k] = 1)) -> 
      	     ((ordering[i,k] = 1) /\
              forAll j : LETTER . 
                   ((t[i,j]=1 /\ t[j,k]=1) ->  (e[i,j] = 1 /\ e[j,k]=1)))
\end{verbatim}
}
\caption{Essence Prime model of word-representable graphs}
\label{eprime-wrg}
\end{figure}

\begin{table}
\begin{center}
\begin{tabular}{|r|r|r|r|r|r|r|}
\hline
\# of & \# of conn. & \multicolumn{5}{c|}{All non-word-representable graphs} \\ \cline{3-7}
vert. & graphs & Total & \% of cand.        &  Time   & Min. & Non-Min.    \\ \hline \hline
6   &           112 &           1 &  0.89\% &   3.0s &   1 &      0  \\ \hline
7   &           853 &          25 &  2.93\% &   4.0s &  10 &     15  \\ \hline
8   &        11,117 &         929 &  8.36\% &    26s &  47 &    882  \\ \hline
9   &       261,080 &      54,957 & 21.05\% &    29m & 179 & 54,778  \\ \hline
10  &    11,716,571 &   4,880,093 & 41.65\% &    74h &  -  &  -      \\ \hline
11  & 1,006,690,565 & 650,856,040 & 64.65\% & 1,100d &  -  &  -      \\ \hline
\end{tabular}
\caption{The numbers of all non-word-representable graphs, as well as the numbers of such graphs, called {\em non-minimal}, that include smaller non-word-representable subgraphs, and those, called {\em minimal}, that do not. The percentage of non-word-representable graphs to all graphs is given to 2 decimal places. 
Times indicate the CPU time used to compute all non-word-representable graphs, to 2 significant figures in an appropriate unit (seconds, minutes, hours, days). The time to count minimal/non-minimal graphs is not shown.}
\label{table-constraint}
\end{center}

\end{table}

\section{Refining semi-transitivity}\label{sec-3-semi-trans}

The notion of $k$-word-representability refines that of word-representability. However, Theorem~\ref{equiv-thm} shows that these notions are equivalent. Still, $k$-word-representability plays a very important role in the theory of word-representable graphs. 

Thinking along similar lines, we introduce the potentially useful notion of a {\em $k$-semi-transitive orientation} refining semi-transitive orientations linked to word-representability via Theorem~\ref{key-thm}.  Recall the definition of a shortcut in Section~\ref{semi-transitive-sec}. An undirected graph is $k$-semi-transitively oriented, or {\em $k$-semi-transitive} for brevity, if it admits an acyclic orientation avoiding shortcuts of length $k$ (longer shortcuts are allowed). In particular, an undirected graph is 3-semi-transitive if it admits an acyclic orientation such that for any directed path $v_0\rightarrow v_1\rightarrow v_2\rightarrow v_3$ of length 3 for which $v_0\rightarrow v_3$ is an edge, also
$v_0 \rightarrow v_2$ and $v_1\rightarrow v_3$ are edges.

The notion of 3-semi-transitivity is easily expressed in SMT. Writing $u_{ij}$ for the boolean expressing whether there is an undirected edge from $i$ to $j$,
and $e_{ij}$ for the boolean expressing whether there is a directed edge from $i$ to $j$, the connection between directed and undirected graph is expressed by
\[ u_{ij} \Leftrightarrow (e_{ij} \vee e_{ji}) \]
for all vertices $i,j$. Being acyclic is expressed by the existence of a weight function $w$ such that
\[ e_{ij} \Rightarrow w(i) > w(j) \]
for all vertices $i,j$. Finally, the path condition is expressed by
\[ ((\exists k,m : ((e_{ik} \wedge e_{jk}) \vee (e_{ki} \wedge e_{kj})) \wedge e_{im} \wedge e_{mj}) \; \Rightarrow \; e_{ij}\]
for all vertices $i,j$, where $\exists$ runs over the vertices.
For a given undirected graph, we take the conjunction of the above requirements and for all $i,j$ we add $\wedge u_{ij}$ if there is an edge from $i$ to $j$, and add $\wedge \neg u_{ij}$ otherwise. Then, by construction, the resulting formula is satisfiable if and only if the undirected graph is 3-semi-transitive. We built these formulas for all connected graphs on $\leq 9$ vertices, and applied Z3 on them. As a result, we determined that for $\leq 8$ vertices a graph is 3-semi-transitive if and only if it is word-representable. In contrast, for 9 vertices we determined that there are exactly 4 graphs that are 3-semi-transitive but not word-representable, and hence not semi-transitive. They are depicted in Figure~\ref{3-semi-tr-ori-graphs}.   An SMT encoding of checking semi-transitivity is also included in the tool linked to in Section~\ref{uniform-wr-sec}.

Using a similar encoding of the problem, these computational results were extended to finding the number of all 3-semi-transitively orientable graphs on up to 10 vertices using the constraint programming methods described in Section~\ref{nwr-sec}. We refer to Table~\ref{table-constraint2} where these results are recorded along with the number of minimal (not containing smaller such graphs as induced subgraphs) non-3-semi-transitively orientable graphs. Comparing Tables~\ref{table-constraint} and~\ref{table-constraint2}, we see that there are 585 3-semi-transitively orientable, but not semi-transitively orientable graphs on 10 vertices. 

\begin{figure}
\begin{center}
\includegraphics[scale=0.006]{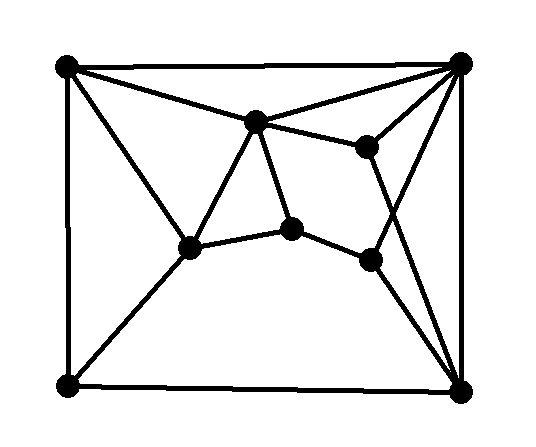}
\includegraphics[scale=0.006]{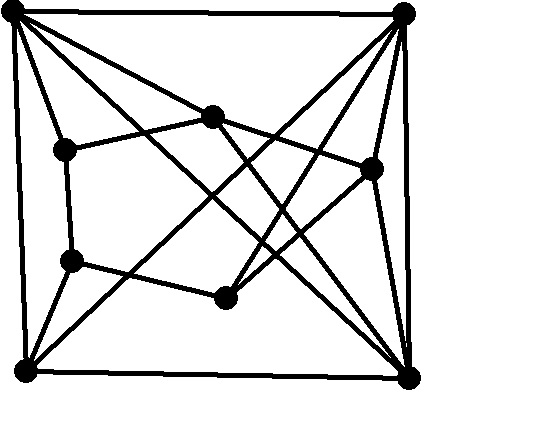}
\includegraphics[scale=0.006]{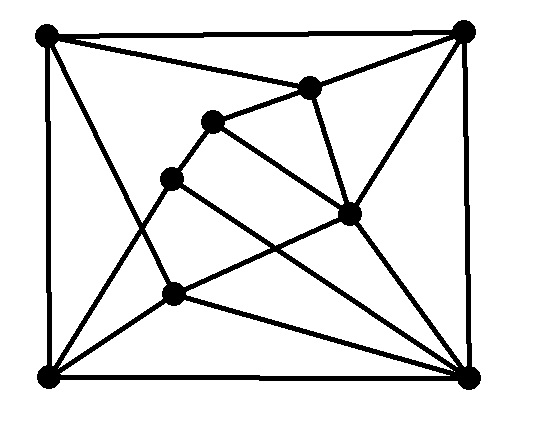}
\includegraphics[scale=0.006]{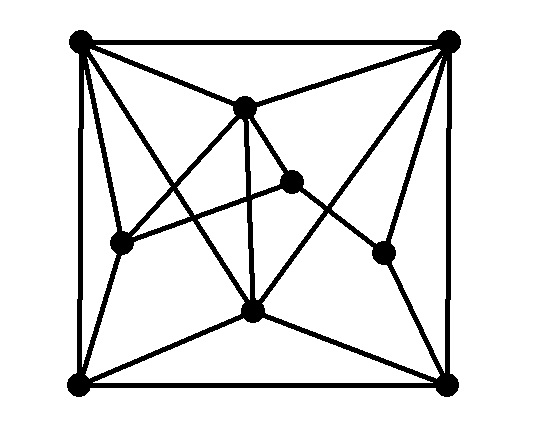}
\end{center}
\vspace{-2mm}
\caption{3-semi-transitively, but not semi-transitively orientable graphs}\label{3-semi-tr-ori-graphs}
\end{figure}

\begin{table}
\begin{center}
\begin{tabular}{|r|r|r|r|r|r|r|}
\hline
\# of & \# of conn. & \multicolumn{5}{c|}{All non-3-semi-transitively orientable graphs} \\ \cline{3-7}
vert. & graphs & Total & \% of cand.        &  Time   & Minimal & Non-Minimal    \\ \hline \hline
6   &           112 &           1 &  0.89\% &   4.0s &   1 &      0  \\ \hline
7   &           853 &          25 &  2.93\% &   6.0s &  10 &     15  \\ \hline
8   &        11,117 &         929 &  8.36\% &    80s &  47 &    882  \\ \hline
9   &       261,080 &      54,953 & 21.05\% &   2.8h & 175 & 54,778  \\ \hline
10  &    11,716,571 &   4,879,508 & 41.65\% &    22d &  -  &  -      \\ \hline
\end{tabular}
\caption{Numbers of (minimal) non-3-semi-transitively orientable graphs and the CPU time to obtain them. The time to count minimal/non-minimal graphs is not shown.
}

\label{table-constraint2}
\end{center}

\end{table}

Thus, the notions of $k$-semi-transitively orientable graphs and semi-transitively orientable graphs are not equivalent.

\section{Concluding remarks}\label{conc-sec}

We conclude by suggesting a few directions of further research relevant to our paper. In each of these directions one can use the computational approaches/tools developed by us  to support finding new results. In particular, one could try to use our tools to take all bipartite graphs and to test Conjecture~\ref{conj1} for larger graphs.

It would be interesting to extend the construction of $J_4$ in Figure~\ref{9-4} (in a natural way) to more than 9 vertices so that new graphs with high representation numbers would be obtained. This may help to prove or disprove Conjecture~\ref{conj2}. 

Also, an intriguing question is whether or not there exists $k$ such that semi-transitive orientability is equivalent to $k$-semi-transitively orientability. If such a $k$ exists, it must be $>3$ (e.g.\ because of the graphs in Figure~\ref{3-semi-tr-ori-graphs}). In either case, to study  the properties of $k$-semi-transitively orientable graphs (at least 3-semi-transitively orientable graphs) is an interesting and challenging direction of research. Many questions that can be asked about word-representable graphs \cite{K17,KL15} can be asked about $k$-semi-transitively orientable graphs, e.g.\ how many such graphs there are, or how we can describe these graphs in terms of forbidden subgraphs, etc, etc.

Finally, even though it seems that our current methods would not be able to extend the results of Table~\ref{table-constraint} to 12 vertices, it is interesting if it would be ever  possible to achieve.


\begin{thebibliography}{20}

\bibitem{AKM15} P. Akrobotu, S. Kitaev, Z. Mas\'{a}rov\'{a}. On word-representability of polyomino triangulations. {\em Siberian Adv. Math.} {\bf 25} (2015) 1, 1--10.

\bibitem{DBLP:reference/fai/Bessiere06} C. Bessiere. Constraint Propagation. {\em Handbook of Constraint Programming} (2006) 29--83.

\bibitem{CKS16} T. Z. Q. Chen, S. Kitaev, B. Y. Sun. Word-representability of face subdivisions of triangular grid graphs. {\em Graphs and Combin.}  {\bf 32(5)} (2016) 1749--1761.

\bibitem{CKS16A} T. Z. Q. Chen, S. Kitaev, B. Y. Sun. Word-representability of triangulations of grid-covered cylinder graphs. {\em Discr. Appl. Math.} {\bf 213(C)} (2016) 60--70.

\bibitem{CKL17} A. Collins, S. Kitaev, V. Lozin. New results on word-representable graphs. {\em Discr. Appl. Math.} {\bf 216} (2017) 136--141.

\bibitem{semigroups-10} A. Distler, C. Jefferson, T. Kelsey, L. Kotthoff. The Semigroups of Order 10. {\em Principles and Practice of Constraint Programming} (2012) 883--899.

\bibitem{GKZ16} A. Gao, S. Kitaev, P. Zhang. On 132-representable graphs. {\em Australasian J. Combin.} {\bf 69(1)} (2017) 105--118.

\bibitem{Gent2006:minion} I. P. Gent, C. Jefferson, I. Miguel. {Minion}: A Fast Scalable Constraint Solver. Proceedings of {ECAI} (2006) 98--102.

\bibitem{squarefree} I. Gent, S. Kitaev, A. Konovalov, S. Linton, P. Nightingale. S-crucial and bicrucial permutations with respect to squares. {\em J. Integer Sequences}
{\bf 18} (2015) 6, 1530--7638.

\bibitem{G16} M. Glen. Colourability and word-representability of near-triangulations. 	arXiv:1605.01688 (2016)

\bibitem{G} M. Glen. Software available at  {\small \verb>personal.cis.strath.ac.uk/> \verb>sergey.kitaev/word-representable-graphs.html>}

\bibitem{GK18} M. Glen, S. Kitaev. Word-representability of triangulations of rectangular polyomino with a single domino tile. {\em J. Combin. Math. Combin. Comput.} {\bf 100} (2017) 131--144.

\bibitem{HKP10} M. Halld\'orsson, S. Kitaev, A. Pyatkin. Graphs capturing alternations in words. {\em Lecture Notes Comp. Sci.} {\bf 6224} (2010) 436--437. 

\bibitem{HKP11} M. Halld\'orsson, S. Kitaev, A. Pyatkin. Alternation graphs. {\em Lecture Notes Comp. Sci.} {\bf 6986} (2011) 191--202. 

\bibitem{HKP16} M. Halld\'orsson, S. Kitaev, A. Pyatkin. Semi-transitive orientations and word-representable graphs. {\em Discr. Appl. Math.} {\bf 201} (2016) 164--171.

\bibitem{H51} T. Hiraguchi. On the dimension of partially ordered sets. {\em Sci. Rep. Kanazawa Univ.} {\bf 1} (1951) 77--94.

\bibitem{efpa} S. Huczynska, P. McKay, I. Miguel, P. Nightingale. Modelling Equidistant Frequency Permutation Arrays: An Application of Constraints to Mathematics. {\em Principles and Practice of Constraint Programming - CP 2009} (2009) 50--64.

\bibitem{K13} S. Kitaev. On graphs with representation number 3. {\em J. Autom., Lang. and Combin.} {\bf 18} (2013) 2, 97--112.

\bibitem{K17} S. Kitaev. A Comprehensive Introduction to the Theory of Word-Representable Graphs. {\em Lect. Notes in Comp. Sci.} {\bf 10396} (2017) 36--67. 

\bibitem{KL15} S. Kitaev, V. Lozin. Words and Graphs, {\em Springer}, 2015.

\bibitem{KP08} S. Kitaev, A. Pyatkin. On representable graphs. {\em
J. Autom., Lang. and Combin.} {\bf 13} (2008) 1, 45--54.

\bibitem{KSSU11} S. Kitaev, P. Salimov, C. Severs, H. \'Ulfarsson. On the representability of line graphs. {\em Lecture Notes Comp. Sci.}  {\bf 6795} (2011) 478--479.  

\bibitem{KS08} S. Kitaev, S. Seif. Word problem of the Perkins semigroup
via directed acyclic graphs. {\em Order} {\bf 25} (2008) 3, 177--194.

\bibitem{geng} B. D. McKay, Adolfo Piperno. Practical graph isomorphism, II.
{\em J. Symb. Comput.} {\bf 60} (2014) 94--112.

\bibitem{M16} Y. Mandelshtam. On graphs representable by pattern-avoiding words. To appear in {\em Discussiones Mathematicae Graph Theory}. arXiv:1608.07614. (2016)

\bibitem{Nightingale2017:automatically} P. Nightingale, \"O. Akg\"un, I. P. Gent, C. Jefferson, I. Miguel, P. Spracklen. Automatically improving constraint models in {Savile Row}. {\em Artificial Intelligence} {\bf 251} (2017) 35--61.

\bibitem{constraints-handbook} F. Rossi, P. Van Beek, T. Walsh. Handbook of constraint programming. {\em Elsevier}, 2006.

\bibitem{yices} The SMT solver {Yices}, available at {\tt http://yices.csl.sri.com/}, 2018.

\bibitem{z3} The SMT solver {Z3}, available at {\tt https://github.com/Z3Prover/z3}, 2018.

\end{thebibliography}
\end{document}